\documentclass[12pt, english, a4paper]{article}
\usepackage[latin9]{inputenc}
\usepackage[T1]{fontenc}
\usepackage{graphicx}
\usepackage{amsthm}
\usepackage{amsfonts}
\usepackage{mathtools}
    \allowdisplaybreaks
\usepackage{amssymb}
\usepackage{makecell}
\usepackage{bm}
\usepackage{comment}
\usepackage{cases}
\usepackage{hyperref}
\usepackage{mathrsfs}
\usepackage{fullpage}
\usepackage{times}
\usepackage[usenames]{color}
\usepackage[dvipsnames]{xcolor}
\usepackage{hyperref}
\usepackage{tikz,tikz-cd}
\usepackage{extarrows}
\usepackage{float}
\usepackage{ytableau}
\usepackage{xcolor}
\usepackage{circledsteps}
 \usepackage{multirow}
\definecolor{lgrey}{rgb}{0.8,0.8,0.8 }
\definecolor{dgrey}{rgb}{0.3,0.3,0.3 }

\newcommand{\red}[1]{\textcolor{red}{#1}}

\newcommand{\card}[1]{\lvert #1 \rvert}

\theoremstyle{plain}
\newtheorem{theorem}{Theorem}
\newtheorem{note}{Note}
\newtheorem{corollary}[theorem]{Corollary}
\newtheorem{lemma}[theorem]{Lemma}

\theoremstyle{definition}

\theoremstyle{remark}

\usepackage{authblk}

\usepackage{ytableau}
\usepackage{xcolor}
\definecolor{lgrey}{rgb}{0.8,0.8,0.8 }
\definecolor{dgrey}{rgb}{0.3,0.3,0.3 }

\title{Families of Shape-Wilf-Equivalent Claw-Shaped Partially Ordered Patterns}

\author{Sucharita Biswas
\thanks{{biswas.sucharita56@gmail.com}}}
\affil{Department of Mathematics, Indian Institute of Technology, Bombay, Mumbai 400076, India} 
\begin{document}

\maketitle
\begin{abstract}
Partially ordered patterns (POPs) generalize classical permutation patterns and have been extensively studied in the contexts of permutations, words, compositions, and partitions. Burstein, Han, Kitaev, and Zhang established the shape-Wilf-equivalence for individual claw-shaped POPs. In this paper, we extend their result by proving that certain families of claw-shaped POPs are shape-Wilf-equivalent and enumerate the number of permutations avoiding that set of claw-shaped POPs. Our approach is based on a new encoding process, which is entirely different from the method used in their work.
\end{abstract}
\textbf{\small{Keyword:}}{\small{} permutation pattern, partially ordered pattern, shape-Wilf-equivalence, Wilf-equivalence }{\let\thefootnote\relax\footnotetext{2020 \textit{Mathematics Subject Classification}. Primary: 05A05, 05A15 , 05A19.}}
\section{Introduction}
Let $\mathbb{N}$ denote the set of all natural numbers. For $n \in \mathbb{N}$ let $[n]=\{1,2,\ldots , n\}$ and $S_n$ be the set of all permutations of the set $[n]$. An occurrence of a classical pattern of size $k$, $p=p_1p_2\cdots p_k$ in a permutation $\pi$ is a subsequence $\pi_{i_1}\dots\pi_{i_k}$ where $1\leq i_1<i_2<\cdots <i_k\leq n$ such that $\pi_{i_j}<\pi_{i_m}$ if and only if $p_j<p_m$. For example the permutation $45213$ has $3$ occurrences of  the pattern $231$ namely the subsequences $452$, $451$ and $453$. Pattern in permutations is a well studied topic in combinatorics with many applications in computer science, computational biology cryptography and many other areas. Not only in permutations, patterns are also studied in words. For further references see the book by Kitaev \cite{kitaev-patterns-words}. 

A partially ordered pattern (POP) is a generalization of classical pattern. A \emph{partially ordered pattern} (POP) $p$ of size $k$ is defined by a $k$-element partially ordered set (poset) $P$ labeled by the elements in $\{1,\dots, k\}$. An occurrence of such a POP $p$ in a permutation $\pi = \pi_1\dots\pi_n$ is a subsequence $\pi_{i_1}\dots \pi_{i_k}$, where $1 \le i_1 <\dots < i_k \le n$, such that for $1\leq j < m \leq k$, $\pi_{i_j} < \pi_{i_m}$ if and only if $j < m$ in $P$. Thus, a classical pattern of length $k$ corresponds to a $k$-element chain. For example consider the POP $p=\begin{tikzpicture}[baseline=(current bounding box)]
		\filldraw  (0,.2) circle (2pt);
        \filldraw  (-.2,-.2) circle (2pt);
        \filldraw  (.2,-.2) circle (2pt);
        \filldraw  (.6,0) circle (2pt);
		
		\draw (0,.2) -- (-.2,-.2); 
        \draw (0,0.2) -- (0.2,-.2); 
        \node[] at (0,.5)   { $1$};
		\node[] at (-.3,-.5)  { $2$};
		\node[] at (.3,-.5) { $4$};
        \node[] at (.8,0)   { $3$};
        
\end{tikzpicture}$. Here $1>2$, $1>4$ and $2,4$ are non comparable and no condition on $3$. Hence $p=\{3 1 4 2, 3 2 4 1, 4 1 2 3, 4 1 3 2, 4 2 1 3, 4 2 3 1, 4 3 1 2, 4 3 2 1\}$.
The permutation $45213$ has $2$ occurrences of $p$ namely, $4213$, $5213$. Hence avoiding the POP $p$ means avoiding all the classical patterns $3 1 4 2, 3 2 4 1, 4 1 2 3, 4 1 3 2, 4 2 1 3, 4 2 3 1, 4 3 1 2, 4 3 2 1$ at a time. 
POPs were introduced by Kitaev \cite{kitaev-pop} and studied  extensively in \cite{gao-kitaev-length-4-5-pops},  \cite{pop-comp},  \cite{segmented-pop}, \cite{introduction-pop}, \cite{survey-pop}, \cite{pop-k-ary}, \cite{v-lambda}.
 We will use the word ``pattern'' to mean a classical pattern and ``POP" to mean partially ordered pattern. 

 A permutation is said to avoid a pattern $p$ if it has no occurrence of $p$. Similarly a permutation is said to avoid a set of patterns $P$  if it does not contain any of those patterns from the set $P$. Set of all permutation of length $n$ avoiding the pattern $p$ is denoted by $S_n(p)$, similarly avoiding the set of patterns $P$ is denoted by $S_n(P)$. Two sets of patterns $P_1$ and $P_2$ are said to be \textit{Wilf-equivalent} if the number of permutations of length $n$ avoiding each pattern in $P_1$ is equal to that avoiding each pattern in $P_2$ for all $n\geq 1$, i.e., $|S_n(P_1)|=|S_n(P_2)|$ for all $n\geq 1$ and it is denoted by $P_1\sim P_2.$ If $\{p_1\}\sim \{p_2\}$ for simplicity we write $p_1\sim p_2$.

The study of shape-Wilf-equivalence is naturally facilitated by examining pattern-avoiding transversals of Ferrers boards, which serve as a generalization of classical permutation matrices. We establish the following conventions and terminology. Ferrers boards are drawn using French notation, where row lengths increase from top to bottom. Columns are indexed from left to right and rows from top to bottom, with $(i,j)$ denoting the cell located in row $i$ and column $j$.

A transversal of a Ferrers board $\lambda=(\lambda_1,\dots,\lambda_n)$ is a $0$--$1$ filling of the board in which every row and every column contains exactly one $1$. We let $S_\lambda$ denote the set of all transversals of $\lambda$. In this context, a permutation $\pi=\pi_1\pi_2\dots\pi_n$ naturally corresponds to a transversal of an $n \times n$ square board: the cells $(\pi_i,i)$ are filled with $1$ for all $1 \le i \le n$, and all remaining cells are filled with $0$. This specific transversal represents the permutation matrix of $\pi$.

Let $\alpha \in S_k$ be a permutation with corresponding permutation matrix $M_\alpha$. A transversal $T$ of an $n \times n$ Ferrers board $\lambda$ is said to contain the pattern $\alpha$ if there exist index subsets of rows $R = \{r_1, \dots, r_k\}$ and columns $C = \{c_1, \dots, c_k\}$ in $[n]$ such that the submatrix $M'$ induced by $R$ and $C$ is a copy of $M_\alpha$, and every cell $(r_i, c_j)$ lies within $\lambda$. In this case, we say $M'$ is an occurrence of $\alpha$, or that $M'$ is isomorphic to $\alpha$. If no such occurrence exists, $T$ avoids $\alpha$, and we call $T$ $\alpha$-avoiding.

For a set of patterns $P$, let $S_\lambda(P)$ denote the set of transversals of $\lambda$ that avoid every pattern in $P$. Two sets of patterns $P_1$ and $P_2$ are said to be shape-Wilf-equivalent, denoted $P_1 \sim_s P_2$, if $\card{S_\lambda(P_1)} = \card{S_\lambda(P_2)}$ for every Ferrers board $\lambda$. When dealing with singleton sets $P=\{\sigma\}$ and $Q=\{\tau\}$, we drop the braces and simply write $\sigma \sim_s \tau$. By definition, shape-Wilf-equivalence naturally implies standard Wilf-equivalence.

In \cite{burstein-shape-wilf}, the authors investigated the shape-Wilf-equivalence of partially ordered patterns (POPs), proving that any two POPs of the form shown in Figure \ref{Fig1} are shape-Wilf-equivalent using a technique inspired by \cite{bwx-main}. In this paper, we generalize their result. Our main result, Theorem \ref{main_thm}, establishes this generalization using a fundamentally different proof technique. Extending an approach introduced in \cite{SUK-arxiv}, we develop an encoding scheme that maps a transversal of a Ferrers board $\lambda$ avoiding a set of POPs $P$ to a specific word over the alphabet $\{0, 1, \dots, n\}$. We utilize this encoding to construct a bijection between $S_{\lambda}(P^a_{(m,k,d)})$ and $S_{\lambda}(P^b_{(m,k,d)})$ for any Ferrers board $\lambda$, where the sets $P^a_{(m,k,d)}$ and $P^b_{(m,k,d)}$ are defined below. Furthermore, in \cite{gao-kitaev-length-4-5-pops}  Gao and Kitaev enumerated the permutations avoiding the specific POPs in Figure \ref{Fig1}, in this paper we provide a generalized formula to count the permutations avoiding the broader set of POPs $P^a_{(m,k,d)}$.

The remainder of this paper is organized as follows. First, we formally introduce the specific families of claw-shaped POPs under consideration and detail our novel, recursive encoding process for transversals of Ferrers boards. Next, we rigorously establish the conditions for valid cell placements during this insertion process and leverage our encoding scheme to prove our main result, demonstrating the shape-Wilf-equivalence of these pattern families. Finally, we present a generalized formula to enumerate the permutations that avoid this broader set of POPs. 
\begin{figure}[!htbp]

   \centering
{
\begin{tikzpicture}[baseline=(current bounding box),scale=1]
     \filldraw  (0,0) circle (2pt);
    \filldraw  (-1.25,-1.5) circle (2pt);
    \filldraw  (-.5,-1.5) circle (2pt);
    \filldraw  (.25,-1.5) circle (2pt);
    \filldraw  (1.25,-1.5) circle (2pt);
    
    \node[] at (.75,-1.5)   {  \Large$\ldots$};

     \draw [thick](0,0) -- (-1.25,-1.5);
     \draw [thick](0,0) -- (-.5,-1.5);
     \draw [thick](0,0) -- (.25,-1.5);
     \draw [thick](0,0) -- (1.25,-1.5);

    \node[] at (0,.3)   {$x_1$};
     \node[] at (-1.25,-1.8)   { $x_2$};
    \node[] at (-.5,-1.8)   {$x_3$};
    \node[] at (.25,-1.8)   {$x_4$};
    \node[] at (1.3,-1.8)   {$x_m$};
\end{tikzpicture}
}
\caption{Claw-shaped POP}  
\label{Fig1}
 \end{figure}
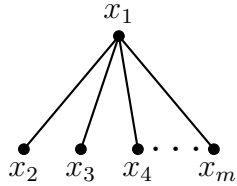

\begin{theorem}{(\cite{burstein-shape-wilf}, Theorem 3)}\label{p1-sim-p2}
Let $p_1$ and $p_2$ be any two POPs as in Figure \ref{Fig1}, where $\{x_1, x_2, \ldots , x_k\} =
\{1, 2, \ldots , k\}$ and $k\geq 1$. Then $p_1 \sim_s p_2$.
\end{theorem}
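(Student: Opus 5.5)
We prove the statement by showing that swapping the positions of the top element and one leaf yields a shape-Wilf-equivalent POP. Write the claw POP as a subset of positions $\{1,\dots,k\}$ with one distinguished position $x_1$ (the top) and leaves in all other positions. Since we may permute the leaves freely (they are mutually incomparable), a claw POP is determined by the position $x_1=j$ of its maximum. Call it $c_j$. It suffices to show $c_j\sim_s c_{j+1}$ for $1\le j<k$, since $\sim_s$ is transitive. An occurrence of $c_j$ in a transversal is a set of $k$ ones in columns $i_1<\dots<i_k$ whose rows and columns span a rectangle inside $\lambda$. The one in column $i_j$ must be higher than all the others.

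\textbf{Reduction via the Backelin--West--Xin type argument.} Rephrase avoidance of $c_j$ as follows. A transversal $T$ contains $c_j$ iff some one at cell $(r,c)$ sees at least $j-1$ ones to its left and at least $k-j$ ones to its right, all in lower rows. Moreover, the rectangle spanned by these ones together with $(r,c)$ must lie in $\lambda$; since $\lambda$ is a Ferrers board (French), this is a condition on the rightmost chosen column and the top row $r$. So contain means: there is a one at height $r$ such that, within the sub-board of cells below row $r$ restricted to columns $\le$ some column $c'$ with the top-right corner in $\lambda$, the ones split as $\ge j-1$ left of $c$ and $\ge k-j$ right of $c$. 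This is a local statement about the top element and the ones below it.

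\textbf{Key step: a bijection acting on the top element's neighborhood.} I would induct on the number of rows, peeling the top row of $\lambda$ (or building the board bottom-up by inserting the topmost one). Suppose the structure of ones below is fixed and avoids the pattern in every sub-rectangle. The admissible columns for the new top one under $c_j$ are those $c$ such that, for every admissible right boundary $c'$, the count of lower ones left of $c$ is $<j-1$ or right of $c$ (up to $c'$) is $<k-j$. The plan is to show that, for a fixed configuration below, the number of admissible positions for $c_j$ and for $c_{j+1}$ agree. Better still, the admissible sets should be related by an order-preserving bijection that depends only on the counts, so one can recursively build a bijection $S_\lambda(c_j)\to S_\lambda(c_{j+1})$ in the style of a generating-tree/encoding. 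Concretely, for each prefix of columns the number of ones is a nondecreasing step function. Admissibility for $c_j$ says the top one sits among the first $j-1$ lower ones or among the last $k-j$ (relative to the row's width). For $c_{j+1}$ it is the first $j$ or last $k-j-1$. Both give "forbidden middle windows" whose counts of free columns coincide because both leave exactly $k-2$ ones as buffer.

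\textbf{Main obstacle.} The difficulty is that the lower configuration is not fixed under the bijection. Changing the top one's column alters which rectangles are available for later (higher) insertions, so a naive count argument is insufficient. I would handle this the way Backelin--West--Xin do: define the recursive encoding by the word recording, at each insertion, the index of the chosen position among admissible ones. The crucial lemma is then that the multiset of future constraints depends only on this word, not on the pattern. This is exactly where care is needed, and I would first verify it in the case $k=3$ (patterns $c_1,c_2$) by hand before generalizing.
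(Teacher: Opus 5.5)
You are aiming at the same encoding the paper uses: record each row's $1$ by its index among the admissible columns, then decode with the other pattern. The paper gets this statement as the case $k=d=0$ of its main theorem. However, your proposal never proves the one fact that makes this encoding work. You state the ``crucial lemma'' only vaguely, present the dependence on the lower configuration as an unresolved ``main obstacle'', and defer the check to $k=3$ by hand. Without that lemma you have neither $|S_\lambda(c_j)|=|S_\lambda(c_{j+1})|$ nor a well-defined inverse map, so the argument is incomplete at its central step. The apparent obstacle comes from your ``fixed configuration below'' framing. If lower rows are filled first, their $1$'s can sit in columns beyond the current row's length, and then the admissible set really does depend on them.

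The missing idea is to insert the $1$'s from the top (shortest) row downward; then nothing depends on other choices. Let $\lambda_r$ be the length of the $r$-th row from the top.
\begin{itemize}
\item Every occurrence of $c_j$ has its apex as its highest $1$. Its rectangle lies in $\lambda$ iff its rightmost column is at most $\lambda_r$, where $r$ is the apex row. So each occurrence is decided exactly when its apex row is processed.
\item At that moment the white columns are $y_1<\dots<y_l$, with $l=\lambda_r-r+1$ independent of earlier choices. These are precisely the columns $\le\lambda_r$ whose $1$'s lie in row $r$ or below.
\item Hence if the $1$ goes into $y_i$, there are exactly $i-1$ lower $1$'s to its left and $l-i$ to its right within the allowed columns, however the lower rows are later filled. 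The placement creates an occurrence iff $j\le i\le l-k+j$.
\item The admissible indices are therefore $[j-1]\cup[l-k+j+1,l]$. This set has size $\min(l,k-1)$ and depends neither on $j$ nor on any other row.
\end{itemize}
This is exactly your lemma. Your own ``peeling'' variant also works: the residual board has row lengths $\lambda_r-1$ whatever column is removed. With the lemma, the encoding words for every $c_j$ range over the same set, namely words $w_1\cdots w_n$ with $w_r\in[\min(\lambda_r-r+1,k-1)]$. Encoding followed by decoding is then the required bijection, and the reduction to adjacent $j,j+1$ is unnecessary. Note also that the admissible count is $k-1$, not a ``$k-2$ buffer''.
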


\begin{theorem}{(\cite{gao-kitaev-length-4-5-pops}, Theorem 2)}\label{claw_size}
    Let $p$ be the POP in Figure \ref{Fig1}, where $\{x_1,\ldots , x_m\}=\{1, \ldots ,m\}$ and $m\geq 1$. Then 
    $|S_n(p)|=\begin{cases}
        n! & \text{ if }n<m,\\
        (m-1)!(m-1)^{n-m+1} &\text{ if }n \geq m.
    \end{cases}$
\end{theorem}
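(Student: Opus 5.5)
By Theorem~\ref{p1-sim-p2}, any two POPs of the shape in Figure~\ref{Fig1} are shape-Wilf-equivalent. Shape-Wilf-equivalence implies Wilf-equivalence (take $\lambda$ to be the $n\times n$ square board). So it suffices to prove the formula for the single most convenient labelling. I will take $x_1=1$, so the maximal element of the claw sits in the first position and $x_2,\dots,x_m$ are the positions $2,\dots,m$ in some order.

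For this choice, an occurrence of $p$ in $\pi\in S_n$ is a subsequence $\pi_{i_1}\pi_{i_2}\cdots\pi_{i_m}$ with $i_1<\dots<i_m$ and $\pi_{i_1}>\pi_{i_j}$ for all $j\ge 2$. The leaves of the claw are pairwise incomparable, so no relation among them is required. Hence $\pi$ contains $p$ if and only if some entry $\pi_i$ has at least $m-1$ entries to its right that are smaller than it. In other words, $\pi$ avoids $p$ exactly when every entry has at most $m-2$ smaller entries to its right.

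This is naturally phrased through the Lehmer code (inversion table). Set $c_i=\card{\{j>i:\pi_j<\pi_i\}}$. The map $\pi\mapsto(c_1,\dots,c_n)$ is a bijection from $S_n$ onto the sequences with $0\le c_i\le n-i$. Under this bijection, $S_n(p)$ corresponds to the sequences satisfying the extra constraint $c_i\le m-2$ for all $i$. The count therefore factors as $\prod_{i=1}^n \min(n-i+1,\,m-1)$.

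\begin{itemize}
\item If $n<m$, then $n-i\le m-2$ for every $i$, so the constraint is vacuous and we get $n!$.
\item If $n\ge m$, the positions $i=1,\dots,n-m+1$ (where $n-i+1\ge m-1$) each contribute $m-1$ choices. The remaining $m-1$ positions $i=n-m+2,\dots,n$ contribute $m-1,m-2,\dots,1$ choices. This gives $(m-1)!\,(m-1)^{n-m+1}$.
\end{itemize}

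The only step needing care is the reduction to $x_1=1$ via Theorem~\ref{p1-sim-p2}; it is immediate once we note that shape-Wilf-equivalence specializes to Wilf-equivalence on square boards. The rest is the standard Lehmer-code bijection.
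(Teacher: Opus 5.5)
Your proof is correct, and it takes a different route from the paper's. Within the paper this statement is not proved separately. It is recovered, in the note after Theorem~\ref{claw_set_size}, as the case $k=d=0$ of the general count, and that count comes from the row-by-row encoding. The entries are inserted from the largest value down. When the current entry goes into the $i$-th of the $l$ remaining slots, the entries still to come are all smaller, and exactly $i-1$ of them lie to its left and $l-i$ to its right. With the top of the claw at position $a$, a placement is admissible iff $i<a$ or $l-i<m-a$. This gives exactly $m-1$ choices whenever $l\ge m$, for every $a$, and $l$ choices when $l<m$. Multiplying over $l=n,\dots,1$ gives $(m-1)!\,(m-1)^{n-m+1}$.

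You instead handle the dependence on the position of the top by appealing to Theorem~\ref{p1-sim-p2}, which is legitimately available, and specializing to square boards. You then count only the case $x_1=1$ with the position-indexed Lehmer code. For $x_1=1$ the two arguments count the same statistic, namely the number of smaller entries to the right; yours indexes it by position and the paper's by value.

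The value indexing is what the paper's route buys. A single slot choice fixes the left and right counts simultaneously, so the coupled condition ``at least $a-1$ smaller entries to the left and at least $m-a$ to the right'' for an interior top is handled directly. The same computation then gives both the enumeration and the (shape-)Wilf-equivalence, and in fact reproves Theorem~\ref{p1-sim-p2}.

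Your route is shorter and entirely classical, but it rests on a deeper equivalence theorem. Without that theorem, the position-indexed code only covers $x_1=1$, and $x_1=m$ by reversal.
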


We now introduce the families of POPs for which we establish shape-Wilf equivalence.

Let $k,d \in \mathbb{N}\cup \{0\}$, $a,m\in \mathbb{N}$ such that $a+kd\leq m$. Consider the set of partial order patterns (POPs), 

\begin{tikzpicture}[scale=.85]
    \node[] at (-3.5,-.75)   {\large$P^a_{(m,k,d)}=~~$};
    \node[] at (-2.25,-.75)   {\Huge$\lbrace$};

    \filldraw  (0,0) circle (2pt);
    \filldraw  (-1.75,-1.5) circle (2pt);
    \filldraw  (-.5,-1.5) circle (2pt);
    \filldraw  (.5,-1.5) circle (2pt);
    \filldraw  (1.75,-1.5) circle (2pt);
    \node[] at (-1.1,-1.5)   {  \Large$\ldots$};
    \node[] at (1.1,-1.5)   {  \Large$\ldots$};

     \draw [thick](0,0) -- (-1.75,-1.5);
     \draw [thick](0,0) -- (-.5,-1.5);
     \draw [thick](0,0) -- (.5,-1.5);
     \draw [thick](0,0) -- (1.75,-1.5);

    \node[] at (0,.3)   {$a$};
     \node[] at (-1.75,-1.8)   {\tiny $1$};
    \node[] at (-.65,-1.8)   {\tiny$(a-1)$};
    \node[] at (.65,-1.8)   {\tiny$(a+1)$};
    \node[] at (1.75,-1.8)   {\tiny$m$};

    \node[] at (2.25,-1.5)   {\Large,};

    \filldraw  (4.5,0) circle (2pt);
    \filldraw  (2.75,-1.5) circle (2pt);
    \filldraw  (4,-1.5) circle (2pt);
    \filldraw  (5,-1.5) circle (2pt);
    \filldraw  (6.25,-1.5) circle (2pt);
    \node[] at (3.35,-1.5)   {  \Large$\ldots$};
    \node[] at (5.65,-1.5)   {  \Large$\ldots$};

    \node[] at (6.65,-1.5)   {\Large,};

    \draw [thick](4.5,0) -- (2.75,-1.5);
    \draw [thick](4.5,0) -- (4,-1.5);
    \draw [thick](4.5,0) -- (5,-1.5);
   \draw [thick](4.5,0) -- (5,-1.5);
   \draw [thick](4.5,0) -- (6.25,-1.5);

    \node[] at (4.5,.3)   {$(a+d)$};
     \node[] at (2.75,-1.8)   {\tiny$1$};
    \node[] at (3.7,-1.8)   {\tiny$(a+d-1)$};
    \node[] at (5.2,-1.8)   {\tiny$(a+d+1)$};
    \node[] at (6.25,-1.8)   {\tiny$m$};

    \node[] at (7.25,-1.5)   {  \Large$\ldots$};
    \node[] at (7.85,-1.5)   {\Large,};

    \filldraw  (10,0) circle (2pt);
    \filldraw  (8.25,-1.5) circle (2pt);
    \filldraw  (9.5,-1.5) circle (2pt);
    \filldraw  (10.5,-1.5) circle (2pt);
    \filldraw  (11.75,-1.5) circle (2pt);

    \node[] at (10,.3)   {$(a+kd)$};
     \node[] at (8.15,-1.8)   {\tiny$1$};
    \node[] at (9.15,-1.8)   {\tiny$(a+kd-1)$};
    \node[] at (10.75,-1.8)   {\tiny$(a+kd+1)$};
    \node[] at (11.85,-1.8)   {\tiny$m$};

    \node[] at (8.85,-1.5)   {  \Large$\ldots$};
    \node[] at (11.15,-1.5)   {  \Large$\ldots$};

    \draw [thick](10,0) -- (8.25,-1.5);
    \draw [thick](10,0) -- (9.5,-1.5);
    \draw [thick](10,0) -- (10.5,-1.5);
    \draw [thick](10,0) -- (11.75,-1.5);

    \node[] at (12.25,-.75)   {\Huge$\rbrace$};
\end{tikzpicture}
\begin{theorem}\label{main_thm}
    For any distinct $a,b\in [m]$, $P^a_{(m,k,d)}\sim_s P^b_{(m,k,d)}.$
\end{theorem}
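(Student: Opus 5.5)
The plan is to build an explicit bijection $S_\lambda(P^a_{(m,k,d)})\to S_\lambda(P^b_{(m,k,d)})$ through a row-by-row encoding of transversals.

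First I translate avoidance into a local condition. The rows of $\lambda$ get shorter from bottom to top. In an occurrence of any POP of $P^a_{(m,k,d)}$, the apex (the maximum of the claw) lies in the highest row used, and that row is the shortest. So the board condition says only that all $m$ columns lie within the first $\lambda_r$ columns, where $r$ is the row of the apex. It follows that a transversal $T$ contains the $i$-th pattern of the family, apex at position $a+id$, if and only if there is a $1$ at some cell $(r,c)$ such that, among the $1$'s lying strictly below row $r$ in columns $\le\lambda_r$, at least $a+id-1$ are to the left of $c$ and at least $m-a-id$ are to the right of $c$.

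Next I define the encoding. I insert the $1$'s row by row from the bottom (longest) row to the top. Let $c_r$ be the number of already placed $1$'s in columns $\le \lambda_r$. The new $1$ of row $r$ falls into a gap $g\in\{0,\dots,c_r\}$ among these $c_r$ points, where $g$ counts the placed points to its left. By the reformulation above, the placement is legal for $P^a$ exactly when
$$g\notin \bigcup_{i=0}^{k}\,[\,a+id-1,\; c_r-m+a+id\,].$$
I record, for each row, the index of the chosen free column among the $\lambda_r-c_r$ free columns $\le\lambda_r$, listed left to right. This gives a word over $\{0,\dots,n\}$, in the spirit of the encoding of \cite{SUK-arxiv}. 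The quantity $c_r$ is determined by the shape and the earlier letters, and $\lambda_r-c_r$ equals the number of free columns. Hence the set of legal words is determined letter by letter.

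The bijection is then as follows. The forbidden gaps for $b$ form the same union of intervals translated by $b-a$, and the lengths $c_r-m+2$ of the intervals are identical in both cases. Taking complements in $\{0,\dots,c_r\}$, the number of legal gaps is the same for $a$ and $b$ at every step. The aim is to map the $t$-th legal free column for $a$ to the $t$-th legal free column for $b$, and to do this simultaneously at every step. For this to be well defined I need the sequence $(c_r)$ to be preserved. I also need the count of legal free columns, not just legal gaps, to agree.

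The main obstacle is that the free columns are not evenly distributed among the gaps. A gap may contain many free columns or none, so counting legal gaps is not enough. To handle this I would strengthen the induction. I would show that the number of legal completions of a partial transversal depends only on $\lambda$ and on the multiset, or better the sequence, of gap sizes restricted to each prefix $[\lambda_r]$. I would then find a rearrangement of gaps that transports the legal set for $a$ onto that for $b$ while preserving $c_{r'}$ for every later row $r'$. Since the later rows are shorter, only the left portion of the column set ever matters again. My first attempt would be an induction on the number of rows: strip the top row, apply the inductive bijection to the remaining board, and then re-insert the top row's point into the gap with the same rank among the legal gaps. The nesting of prefixes $[\lambda_r]$ should make the left-justified structure compatible with this.

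If this fails, I would fall back on the Babson–West style argument used in \cite{burstein-shape-wilf} and \cite{bwx-main}. That argument exploits the symmetry of the patterns below the apex (they are an antichain, so invariant under the relevant reorderings) to reduce to the adjacent case $b=a+1$.
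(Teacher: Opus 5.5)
Your reformulation of avoidance is correct: the apex sits in the highest, hence shortest, row $r$, so only columns $\le\lambda_r$ matter. Your count of forbidden gaps is also correct. But the proposal stops at the decisive step and never produces the bijection.

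With bottom-up insertion, the number of legal \emph{free columns} depends on earlier choices, and it differs between $a$ and $b$ for the same history. There are also dead ends. Take $m=2$, $k=d=0$ (so $P^1=\{21\}$, $P^2=\{12\}$) on the board with row lengths $2,3,3$ from the top. Putting the bottom $1$ in column $1$ leaves two legal free columns in the middle row for $a=1$, and none for $b=2$. For $a=1$, choosing column $2$ there then leaves the top row with no free cell. So the letter-by-letter map is undefined.

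Your proposed repairs are stated as hopes rather than arguments. These are the rearrangement of gaps preserving every later $c_{r'}$, and the Babson--West reduction to $b=a+1$. The argument of Burstein et al.\ treats a single claw, and nothing in your sketch extends it to a family of apex positions.

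The missing idea is to insert in the opposite direction, from the top (shortest) row downward, which is what the paper does. Your ``first attempt'' is this in recursive form, provided you delete the top row \emph{together with the column of its $1$}. Then the rest is a transversal of the Ferrers board with every other row shortened by one.

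The key observation is that every free column $\le\lambda_r$ must eventually be filled by one of the longer rows below. So the $1$'s that will lie below the new $1$ in columns $\le\lambda_r$ are exactly the other currently free columns. Count rows from the top; row $r$ has $l=\lambda_r-r+1$ free columns, a number depending only on $\lambda$. If the new $1$ is in the $i$-th of them, it will have exactly $i-1$ points to its left and $l-i$ to its right. Hence it is legal iff $i\notin\bigcup_{j=0}^{k}[a+jd,\,a+jd+l-m]$. Constraints in which it is a non-apex were already settled when the higher apex was placed.

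In this direction gaps and free columns coincide, so your translation argument applies verbatim. The number of legal $i$ is $l$ if $l<m$, and $m-1-k\min(d,l-m+1)$ otherwise. This is independent of $a$; the condition $a+kd\le m$ keeps the union inside $[1,l]$. Since these counts depend only on $\lambda$ and the row, the sets of encoding words for $a$ and $b$ coincide. Sending the $t$-th legal column for $a$ to the $t$-th legal column for $b$ in each row is then the required bijection.
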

\begin{note}
    For shape Wilf-equivalence, the parameters $m$, $k$, and $d$ must be the same for both sets $P^a_{(m,k,d)}$ and $P^b_{(m,k,d)}$. If $m$, $k$ or $d$ differ, then the sets are not shape Wilf-equivalent. For example,
\[
P^1_{(3,0,0)} \nsim_s P^2_{(4,0,0)}, 
\qquad 
P^1_{(4,1,1)} \nsim_s P^2_{(4,1,2)}, 
\qquad 
P^1_{(4,0,0)} \nsim_s P^2_{(4,1,2)}.
\]
\end{note}
\begin{note}
    When $P^a_{(m,k,d)}$ is a singleton set then we consider $k=d=0$. Therefore $P^{x_1}_{(m,0,0)}=\begin{tikzpicture}[baseline=(current bounding box),scale=.6]
    \node[] at (-1.75,-.75)   {\Large$\lbrace$};
     \filldraw  (0,0) circle (2pt);
    \filldraw  (-1.25,-1.5) circle (2pt);
    \filldraw  (-.5,-1.5) circle (2pt);
    \filldraw  (.25,-1.5) circle (2pt);
    \filldraw  (1.25,-1.5) circle (2pt);
    \node[] at (1.75,-.75)   {\Large$\rbrace$};
    \node[] at (.75,-1.5)   { \small $\ldots$};

     \draw [thick](0,0) -- (-1.25,-1.5);
     \draw [thick](0,0) -- (-.5,-1.5);
     \draw [thick](0,0) -- (.25,-1.5);
     \draw [thick](0,0) -- (1.25,-1.5);

    \node[] at (0,.3)   {\tiny$x_1$};
     \node[] at (-1.25,-1.8)   { \tiny$x_2$};
    \node[] at (-.5,-1.8)   {\tiny$x_3$};
    \node[] at (.25,-1.8)   {\tiny$x_4$};
    \node[] at (1.3,-1.8)   {\tiny$x_m$};
\end{tikzpicture}.$
    Hence Theorem \ref{main_thm} is generalization of Theorem \ref{p1-sim-p2}.
\end{note}
At first we will define the encoding process explicitly. 
\paragraph{ Encoding Process:}
To derive the encoding word associated with a transversal, we use the following recursive procedure. First, all entries equal to 1 are removed from the Ferrers board. They are then reinserted in their original positions, proceeding row by row from the topmost row downward.
\begin{enumerate}\label{process}
\item[Step 1:] Initialize an empty Ferrers board with all cells colored white.
\item[Step 2:] Restore the $1$ belonging to the topmost row to its original position, then color its row and column gray.
\item[Step 3:] Assuming the $1$'s for the top $k$ rows have been placed, position the next $1$ in the topmost row of the remaining white subboard. Color its corresponding row and column gray.
\end{enumerate}

 When the entries are inserted according to the above procedure to obtain a transversal avoiding \(P^a_{(m,k,d)}\) or \(P^b_{(m,k,d)}\), not every position in a row is admissible. Indeed, placing a \(1\) in certain cells would inevitably force an occurrence of the POPs in combination with either previously placed entries or entries that must be placed later.
We call a cell (or position) \emph{valid} if placing a \(1\) in that cell does not create an occurrence of \(P^a_{(m,k,d)}\) or \(P^b_{(m,k,d)}\), either with respect to the entries already placed or with respect to any possible future placements. In other word, a cell is valid iff there exists at least one completion of the remaining white subboard into a transversal avoiding the POP family.

\begin{lemma}
    Let $\lambda$ be the Ferrers board. At some stage $i$ of the respective insertion processes for $P^a_{(m,k,d)}$, let $\lambda'$ denote the board consisting of the remaining white cells of $\lambda$. Suppose that the top row of $\lambda'$ corresponds to row $r$ and the columns $c_1<\dots<c_l$ of $\lambda$.
    If $l\geq m$ then $I$ is the set of all valid position where $1$ can be placed while still avoiding the pattern set $P^a_{(m,k,d)}$, where
$$ I=[a-1]\cup 
\{\, i\in[l]\setminus\{a+jd:0\le j\le k\} :
a+jd<i \Rightarrow l-i<m-(a+jd),\;\forall ~0\le j\le k \,\}.
$$
\end{lemma}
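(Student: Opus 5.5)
The plan is to reduce the validity of a cell to a purely local condition on the row being filled, exploiting the fact that rows are processed from the top (shortest) row downward. In French notation the top row $r$ of $\lambda'$ is the shortest remaining row. Every row below it is at least as long, so it contains all of $c_1,\dots,c_l$.

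\textbf{Step 1: the new $1$ is the apex of any occurrence it belongs to with later entries.} Place a $1$ in column $c_i$ of row $r$. All $1$'s placed later lie in rows strictly below $r$, so they are smaller in the pattern order. Moreover, each of the columns $c_1,\dots,c_l$ other than $c_i$ must eventually receive its $1$ from some row below $r$. Any row $r'>r$ contains $c_1,\dots,c_l$, so every cell $(r',c_s)$ and $(r,c_s)$ lies in $\lambda$. Hence, in every completion, the new entry has exactly $i-1$ smaller entries to its left and $l-i$ smaller entries to its right among the columns $c_1,\dots,c_l$, and every sub-selection of them forms a legitimate submatrix inside $\lambda$. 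Columns to the right of $c_l$ cannot be used together with row $r$, because the cell in row $r$ would fall outside $\lambda$.

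\textbf{Step 2: necessity.} The new entry is the apex of a forced occurrence of the claw with apex label $a+jd$ exactly when $i-1\ge a+jd-1$ and $l-i\ge m-(a+jd)$. So if $i\ge a+jd$ and $l-i\ge m-(a+jd)$ for some $0\le j\le k$, the cell is invalid no matter how the board is completed. This is precisely the complement of the condition defining $I$. Two checks confirm the description of $I$. First, for $i=a+jd$ we have $l-i\ge m-(a+jd)$ because $l\ge m$, so these positions are excluded automatically. Second, for $i\le a-1$ the hypothesis $i\ge a+jd$ never holds, which explains the unconditional term $[a-1]$.

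\textbf{Step 3: sufficiency.} In any occurrence of a claw in a transversal, the apex is the topmost of the $m$ entries, so it is the first of them inserted by the process. Hence a transversal avoids $P^a_{(m,k,d)}$ if and only if no entry, at the moment it is inserted, satisfies the forbidden inequality of Step 2 relative to its own stage (its own $l$ and position). Stages with $l<m$ impose no constraint. This shows that earlier placements can never create an occurrence involving the current entry as a non-apex element. That possibility was already accounted for when those earlier entries were placed, because their right- and left-counts are fixed by the shape and not by where later $1$'s go. Consequently a cell in $I$ is valid precisely when the remaining board admits some sequence of choices lying in the corresponding sets $I$ at each later stage. It then remains to show that such a completion exists.

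\textbf{Main obstacle.} I expect the hard part to be this existence of a completion, that is, showing that the sets $I$ at later stages are nonempty whenever the board can be completed at all. The degenerate case is when every position $1,\dots,m$ is an apex label, which forces $I=\emptyset$ and no transversal at all; in that case every cell is invalid and the statement holds vacuously. Otherwise, I would exhibit an explicit element of $I$ depending only on $l$:
\begin{itemize}
\item position $a-1$ if $a\ge 2$;
\item position $l$ if $m$ is not an apex label;
\item otherwise the position just after an apex label that is not an apex, chosen so that all larger apex labels $a+jd$ satisfy $l-i<m-(a+jd)$.
\end{itemize}
Since the next stage's $l$ depends only on the shape of $\lambda$, choosing such a position at every stage produces a full avoiding transversal.
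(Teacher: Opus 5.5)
Your Steps 1 and 2 match the paper's argument; the paper exhibits the same forced occurrence on the columns $c_{i-(a+jd)+1},\dots,c_{i+m-(a+jd)}$. The first part of your Step 3 also adds something the paper's converse takes for granted without comment: an entry's left and right counts are frozen when it is inserted, so a new $1$ can only complete an occurrence as its apex.

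The genuine gap is your final step. The witness in your third bullet is generally not in $I$. For an apex label $p=a+jd<i$, membership requires $l-i<m-p$, i.e.\ $i\ge p+(l-m+1)$, so position $p+1$ qualifies only when $l=m$. More seriously, the claim that bullet supports is false: $I$ can be empty even when not every position of $[m]$ is an apex label. Take $a=1$, $k=1$, $d=2$, $m=3$ (apex labels $1$ and $3$) and $l=4$. Position $2$ fails because $4-2\not<3-1$, and position $4$ fails because $0\not<0$, so $I=\emptyset$. In general the paper's corollary shows $|I|=0$ exactly when $a=1$, $m=kd+1$ and $l\ge d(k+1)$. For $d\ge 2$ this goes well beyond your degenerate case.

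Consequently, under the reading of ``valid'' you adopt (some completion of the white subboard avoids $P^a_{(m,k,d)}$), the lemma itself is false, so no argument for your ``main obstacle'' can succeed. Take $P^1_{(3,1,2)}$ on $\lambda=(5,5,5,5,3)$, a top row of length $3$ with four rows of length $5$ beneath it. The first stage has $l=3$ and $I=\{2\}$. The second stage has $l=4$ whatever the first choice was, and there $I=\emptyset$, so no cell of the top row extends to an avoiding transversal.

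What is missing is therefore not a better witness but the right notion of validity. As the paper proves the lemma (its proof never mentions completions), it characterizes the positions where placing the $1$ creates no occurrence with the entries already placed and with the entries that must later fill the other white columns $c_1,\dots,c_l$ below it. Under that local reading, your Steps 1--3 with the completion claim deleted already form a complete proof. Whether every later stage has $|I|\ge 1$ is a separate question. The paper handles it only through its remark that a stage with $|I|=0$ means $\lambda$ has no avoiding transversal.
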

\begin{proof}
Let \(i\) be a position in the topmost row. We first characterize when \(i\) is valid.

If \(i \in [a-1]\), then placing a \(1\) in column \(c_i\) cannot create an occurrence of any POP from \(P^a_{(m,k,d)}\), and hence \(i\) is valid.

Next, observe that \(i \neq a+jd\) for all \(0 \le j \le k\). Consider a position \(i > a+jd\) for some \(0 \le j \le k\). If
\[
l - i \ge m - (a+jd),
\quad \text{i.e.,} \quad
i + m - (a+jd) \le l,
\]
then the columns
$$
c_{\,i-(a+jd)+1}, \ldots, c_i, \ldots, c_{\,i+m-(a+jd)}
$$
form an occurrence of the POP
\begin{tikzpicture}[baseline=(current bounding box),scale=0.8]
    \filldraw  (10,0) circle (2pt);
    \filldraw  (8.25,-1.5) circle (2pt);
    \filldraw  (9.5,-1.5) circle (2pt);
    \filldraw  (10.5,-1.5) circle (2pt);
    \filldraw  (11.75,-1.5) circle (2pt);

    \node at (10,.3)   {\tiny$(a+jd)$};
    \node at (8.15,-1.8)   {\tiny$1$};
    \node at (9.1,-1.8)   {\tiny$(a+jd-1)$};
    \node at (10.8,-1.8)   {\tiny$(a+jd+1)$};
    \node at (11.85,-1.8)   {\tiny$m$};

    \node at (8.85,-1.5) {\Large$\ldots$};
    \node at (11.15,-1.5) {\Large$\ldots$};

    \draw[thick](10,0) -- (8.25,-1.5);
    \draw[thick](10,0) -- (9.5,-1.5);
    \draw[thick](10,0) -- (10.5,-1.5);
    \draw[thick](10,0) -- (11.75,-1.5);
\end{tikzpicture}
which belongs to \(P^a_{(m,k,d)}\), a contradiction. Therefore,
\[
l - i < m - (a+jd).
\]

Conversely, assume that
\[
l - i < m - (a+jd) \quad \text{for all } 0 \le j \le k.
\]
Suppose, for contradiction, that \(i\) is not a valid position. Then \(c_i\), together with \(m-1\) other columns, forms an occurrence of a POP from \(P^a_{(m,k,d)}\), say
\begin{tikzpicture}[baseline=(current bounding box),scale=0.8]
    \filldraw  (10,0) circle (2pt);
    \filldraw  (8.25,-1.5) circle (2pt);
    \filldraw  (9.5,-1.5) circle (2pt);
    \filldraw  (10.5,-1.5) circle (2pt);
    \filldraw  (11.75,-1.5) circle (2pt);

    \node at (10,.3)   {\tiny$(a+j'd)$};
    \node at (8.15,-1.8)   {\tiny$1$};
    \node at (9.1,-1.8)   {\tiny$(a+j'd-1)$};
    \node at (10.9,-1.8)   {\tiny$(a+j'd+1)$};
    \node at (11.9,-1.8)   {\tiny$m$};

    \node at (8.85,-1.5) {\Large$\ldots$};
    \node at (11.15,-1.5) {\Large$\ldots$};

    \draw[thick](10,0) -- (8.25,-1.5);
    \draw[thick](10,0) -- (9.5,-1.5);
    \draw[thick](10,0) -- (10.5,-1.5);
    \draw[thick](10,0) -- (11.75,-1.5);
\end{tikzpicture}
for some \(0 \le j' \le k\). However, forming such a POP requires at least \(m - (a+j'd)\) columns to the right of \(c_i\), which is impossible since
\[
l < i + m - (a+j'd).
\]
This contradiction shows that \(i\) must be valid, this completes the proof.
\end{proof}

\begin{note}
    If \(l<m\), then all \(l\) columns in the topmost row are valid for placing \(1\), since this placement cannot create an occurrence of any POP from \(P^a_{(m,k,d)}\).
\end{note}

Using the above scheme, we create an encoding that maps transversals that avoid $P^a_{(m,k,d)}$ to those that avoid $P^b_{(m,k,d)}$. The encoding will be a word with the letters $\{0,1,\ldots , n\}$, obtained in the following way. Let $T$ be such a transversal of $F$, and let $w = w_1 \dots w_n$ be the associated word over $\{0,1,\ldots,n\}$. Remove all the $1$'s and place them back on the board according the process described above in \ref{process}. For the $1$ in the $i$-th row (from the top), set:  
\begin{enumerate}
 \item $w_i=j$ if it is in the $j$-th valid position (from left to right) of the subboard of remaining white cells and its placement is not forced, i.e., $|I|\geq 2$. 
    \item $w_i = 0$ if the placement is forced, i.e., $|I| = 1$. 
\end{enumerate}
Now, to obtain a transversal of $F$ that avoids $P^b_{(m,k,d)}$, we invert the encoding as follows. Let $w=w_1\dots w_n$ be the encoding word.
\begin{enumerate}
    \item If $w_i=j$, place $1$  in the $j$-th valid position (from left to right) of the subboard of the remaining white cells.
    \item If $w_i=0$, place it in the only suitable position available in the top row of white cells.
    \item Return the transversal after all the $1$'s are placed.
\end{enumerate}

We illustrate the encoding procedure with an example. Consider the pattern sets 
$P^1_{(4,1,2)}$= \begin{tikzpicture}[baseline=(current bounding box), scale=.8]
    \node[] at (-1.25,-.75)   {\Huge$\lbrace$};

    \filldraw  (0,0) circle (2pt);
    \filldraw  (-.75,-1.5) circle (2pt);
    \filldraw  (0,-1.5) circle (2pt);
    \filldraw  (.75,-1.5) circle (2pt);

     \draw [thick](0,0) -- (-.75,-1.5);
     \draw [thick](0,0) -- (0,-1.5);
     \draw [thick](0,0) -- (.75,-1.5);
     
    \node[] at (0,.3)   {\small$1$};
     \node[] at (-.75,-1.8)   {\small$2$};
    \node[] at (0,-1.8)   {\small$3$};
    \node[] at (.75,-1.8)   {\small$4$};

    \node[] at (1.25,-1.5)   {\Large,};

    \filldraw  (2.5,0) circle (2pt);
    \filldraw  (1.75,-1.5) circle (2pt);
    \filldraw  (2.5,-1.5) circle (2pt);
    \filldraw  (3.25,-1.5) circle (2pt);
   
    \node[] at (3.75,-.75)   {\Huge$\rbrace$};

    \draw [thick](2.5,0) -- (1.75,-1.5);
    \draw [thick](2.5,0) -- (2.5,-1.5);
    \draw [thick](2.5,0) -- (3.25,-1.5);
   
    \node[] at (2.5,.3)   {\small$3$};
     \node[] at (1.75,-1.8)   {\small$1$};
    \node[] at (2.5,-1.8)   {\small$2$};
    \node[] at (3.25,-1.8)   {\small$4$};
    
\end{tikzpicture} and 
 $P^2_{(4,1,2)}$= \begin{tikzpicture}[baseline=(current bounding box), scale=.8]
    \node[] at (-1.25,-.75)   {\Huge$\lbrace$};

    \filldraw  (0,0) circle (2pt);
    \filldraw  (-.75,-1.5) circle (2pt);
    \filldraw  (0,-1.5) circle (2pt);
    \filldraw  (.75,-1.5) circle (2pt);

     \draw [thick](0,0) -- (-.75,-1.5);
     \draw [thick](0,0) -- (0,-1.5);
     \draw [thick](0,0) -- (.75,-1.5);
     
    \node[] at (0,.3)   {\small$2$};
     \node[] at (-.75,-1.8)   {\small$1$};
    \node[] at (0,-1.8)   {\small$3$};
    \node[] at (.75,-1.8)   {\small$4$};

    \node[] at (1.25,-1.5)   {\Large,};

    \filldraw  (2.5,0) circle (2pt);
    \filldraw  (1.75,-1.5) circle (2pt);
    \filldraw  (2.5,-1.5) circle (2pt);
    \filldraw  (3.25,-1.5) circle (2pt);
   
    \node[] at (3.75,-.75)   {\Huge$\rbrace$};

    \draw [thick](2.5,0) -- (1.75,-1.5);
    \draw [thick](2.5,0) -- (2.5,-1.5);
    \draw [thick](2.5,0) -- (3.25,-1.5);
   
    \node[] at (2.5,.3)   {\small$4$};
     \node[] at (1.75,-1.8)   {\small$1$};
    \node[] at (2.5,-1.8)   {\small$2$};
    \node[] at (3.25,-1.8)   {\small$3$};
    
\end{tikzpicture} 
and the Ferrers board 
$\lambda=(6,6,6,6,5,3)$. Let $T_{\lambda}=(2,6,5,1,3,4)$ be a transversal of $\lambda$ 
avoiding $P^1_{(4,1,2)}$. We first construct the encoding 
$w_{T_{\lambda}}=w_1w_2w_3w_4w_5w_6$ associated with $T_{\lambda}$, and then use this encoding to obtain a transversal $T'_{\lambda}$ that avoids $P^2_{(4,1,2)}$.

$T_{\lambda}=(2,6,5,1,3,4)=$ \scalebox{0.83}{
\ytableausetup{centertableaux} 

\begin{ytableau}
     \textcolor{white}{d} & \bullet & \textcolor{white}{d}\\
     \textcolor{white}{d} & \textcolor{white}{d} & \bullet & &\\
    \textcolor{white}{d} & &  & &\textcolor{white}{d}  &\bullet\\
     & \textcolor{white}{d} & \textcolor{white}{d} & &\bullet &\\
\bullet & \textcolor{white}{d} & \textcolor{white}{d} &  & & \\
      & \textcolor{white}{d} & \textcolor{white}{d} & \bullet & & 
\end{ytableau}}

\vspace{0.2cm}

In the first (topmost) row there are $l=3$ white cells. Since $l<m=4$, all 
three positions are valid; as $1$ lies in the second valid position, we get 
$w_1=2$. In the second row, $l=4$, but only two positions (the second and 
fourth columns) are valid; since $1$ occupies the first of these, $w_2=1$. 
Similarly, $w_3=2$. 

In the fourth row, $l=3<m$, so all positions are valid and $1$ appears in 
the third position, giving $w_4=3$. In the fifth row, there are two white 
cells and $1$ is in the first valid position, so $w_5=1$. The last row has 
only one possible position, hence $w_6=0$. 

Thus $w_{T_{\lambda}}=(2,1,2,3,1,0)$. The red entries indicate the transversal and corresponding encoding.

\begin{figure}[!htbp]

   \centering
{

\scalebox{0.83}{
\begin{ytableau}
     $1$  & $\red{2}$ & $3$\\
     \textcolor{white}{d} & \textcolor{white}{d} & \textcolor{white}{d} & &\\
    \textcolor{white}{d} & &  & &\textcolor{white}{d}  &\\
     & \textcolor{white}{d} & \textcolor{white}{d} & & &\\
     & \textcolor{white}{d} & \textcolor{white}{d} &  & & \\
      & \textcolor{white}{d} & \textcolor{white}{d} &  & & 
\end{ytableau}
$\longrightarrow$
\begin{ytableau}
     *(lgrey)$1$  & *(lgrey)$\red{2}$ & *(lgrey)$3$\\
     \textcolor{white}{d} &  *(lgrey) & $\red{1}$  & & $2$\\
    \textcolor{white}{d} &    *(lgrey) &  & &\textcolor{white}{d}  &\\
     &    *(lgrey) & \textcolor{white}{d} & & &\\
     &    *(lgrey) & \textcolor{white}{d} &  & &\\
      & *(lgrey) & \textcolor{white}{d} &  & & \\
\end{ytableau}
$\longrightarrow$
\begin{ytableau}
     *(lgrey)$1$  & *(lgrey)$\red{2}$ &*(lgrey) $3$\\
     *(lgrey) &  *(lgrey) & *(lgrey)$\red{1}$  & *(lgrey)& *(lgrey)$2$\\
    \textcolor{white}{d} &    *(lgrey) & *(lgrey) & $1$& & $\red{2}$\\
     &    *(lgrey) & *(lgrey) & & &\\
     &    *(lgrey) & *(lgrey) &  & &\\
      & *(lgrey) & *(lgrey) &  & & \\
\end{ytableau}
$\longrightarrow$
\begin{ytableau}
    *(lgrey) $1$  & *(lgrey)$\red{2}$ & *(lgrey)$3$\\
     *(lgrey) &  *(lgrey) & *(lgrey)$\red{1}$  & *(lgrey)& *(lgrey)$2$\\
    *(lgrey) &    *(lgrey) & *(lgrey) & *(lgrey) $1$& *(lgrey)& *(lgrey)$\red{2}$\\
     $1$ &    *(lgrey) & *(lgrey) & $2$ & $\red{3}$ &*(lgrey)\\
     &    *(lgrey) & *(lgrey) &  & &*(lgrey)\\
      & *(lgrey) & *(lgrey) &  & & *(lgrey)\\
\end{ytableau}

}\vspace{0.2cm}

\scalebox{0.835}{
$\longrightarrow$
\begin{ytableau}
     *(lgrey)$1$  & *(lgrey)$\red{2}$ & *(lgrey)$3$\\
     *(lgrey) &  *(lgrey) & *(lgrey)$\red{1}$  &*(lgrey) & *(lgrey)$2$\\
   *(lgrey) &    *(lgrey) & *(lgrey) & *(lgrey)$1$& *(lgrey)& *(lgrey)$\red{2}$\\
     *(lgrey)$1$ &    *(lgrey) & *(lgrey) &*(lgrey) $2$ & *(lgrey)$\red{3}$ &*(lgrey)\\
     $\red{1}$&    *(lgrey) & *(lgrey) & $2$ & *(lgrey) &*(lgrey)\\
      & *(lgrey) & *(lgrey) &   &*(lgrey) & *(lgrey)\\
\end{ytableau}
$\longrightarrow$
\begin{ytableau}
     *(lgrey)$1$  & *(lgrey)$\red{2}$ & *(lgrey)$3$\\
     *(lgrey) &  *(lgrey) & *(lgrey)$\red{1}$  &*(lgrey) & *(lgrey)$2$\\
   *(lgrey) &    *(lgrey) & *(lgrey) & *(lgrey)$1$& *(lgrey)& *(lgrey)$\red{2}$\\
     *(lgrey)$1$ &    *(lgrey) & *(lgrey) &*(lgrey) $2$ & *(lgrey)$\red{3}$ &*(lgrey)\\
     *(lgrey)$\red{1}$&    *(lgrey) & *(lgrey) & *(lgrey)$2$ & *(lgrey) &*(lgrey)\\
      *(lgrey)& *(lgrey) & *(lgrey) &  $\red{0}$ &*(lgrey) & *(lgrey)\\
\end{ytableau}
}
}
\caption{ Step-by-step insertion process for $T_{\lambda}$}  
\label{Fig2}
 \end{figure}

 We now construct the transversal avoiding $P^2_{(4,1,2)}$ from the encoding 
$w_{T_{\lambda}}=(2,1,2,3,1,0)$. The boards below illustrate the insertion process, which proceeds exactly as before. 

\begin{figure}[!htbp]

   \centering
{

\scalebox{0.83}{
\begin{ytableau}
     $1$  & $\red{2}$ & $3$\\
     \textcolor{white}{d} & \textcolor{white}{d} & \textcolor{white}{d} & &\\
    \textcolor{white}{d} & &  & &\textcolor{white}{d}  &\\
     & \textcolor{white}{d} & \textcolor{white}{d} & & &\\
     & \textcolor{white}{d} & \textcolor{white}{d} &  & & \\
      & \textcolor{white}{d} & \textcolor{white}{d} &  & & 
\end{ytableau}
$\longrightarrow$
\begin{ytableau}
     *(lgrey)$1$  & *(lgrey)$\red{2}$ & *(lgrey)$3$\\
     $\red{1}$ &  *(lgrey) & $2$  & & \\
    \textcolor{white}{d} &    *(lgrey) &  & &\textcolor{white}{d}  &\\
     &    *(lgrey) & \textcolor{white}{d} & & &\\
     &    *(lgrey) & \textcolor{white}{d} &  & &\\
      & *(lgrey) & \textcolor{white}{d} &  & & \\
\end{ytableau}
$\longrightarrow$
\begin{ytableau}
     *(lgrey)$1$  & *(lgrey)$\red{2}$ & *(lgrey)$3$\\
     *(lgrey)$\red{1}$ &  *(lgrey) & *(lgrey)$2$  & *(lgrey)& *(lgrey) \\
    *(lgrey) &    *(lgrey) & $1$ & & $\red{2}$  &\\
     *(lgrey) &    *(lgrey) & \textcolor{white}{d} & & &\\
     *(lgrey)&    *(lgrey) & \textcolor{white}{d} &  & &\\
      *(lgrey)& *(lgrey) & \textcolor{white}{d} &  & & \\
\end{ytableau}
$\longrightarrow$
\begin{ytableau}
     *(lgrey)$1$  & *(lgrey)$\red{2}$ & *(lgrey)$3$\\
     *(lgrey)$\red{1}$ &  *(lgrey) & *(lgrey)$2$  & *(lgrey)& *(lgrey) \\
    *(lgrey) &    *(lgrey) & *(lgrey) $1$ &*(lgrey) & *(lgrey)$\red{2}$  &*(lgrey)\\
     *(lgrey)&    *(lgrey) & $1$ & $2$ &*(lgrey) & $\red{3}$\\
     *(lgrey)&    *(lgrey) & \textcolor{white}{d} &  &*(lgrey) &\\
      *(lgrey)& *(lgrey) & \textcolor{white}{d} &  & *(lgrey)& \\
\end{ytableau}

}\vspace{0.2cm}

\scalebox{0.835}{
$\longrightarrow$
\begin{ytableau}
     *(lgrey)$1$  & *(lgrey)$\red{2}$ & *(lgrey)$3$\\
     *(lgrey)$\red{1}$ &  *(lgrey) & *(lgrey)$2$  & *(lgrey)& *(lgrey) \\
    *(lgrey) &    *(lgrey) & *(lgrey) $1$ &*(lgrey) & *(lgrey)$\red{2}$  &*(lgrey)\\
     *(lgrey)&    *(lgrey) & *(lgrey)$1$ &*(lgrey) $2$ &*(lgrey) & *(lgrey)$\red{3}$\\
      *(lgrey)& *(lgrey) & $\red{1}$ & $2$  & *(lgrey)& *(lgrey)\\
      *(lgrey) &    *(lgrey) & \textcolor{white}{d} & &*(lgrey) &*(lgrey)\\
\end{ytableau}
$\longrightarrow$
\begin{ytableau}
     *(lgrey)$1$  & *(lgrey)$\red{2}$ & *(lgrey)$3$\\
     *(lgrey)$\red{1}$ &  *(lgrey) & *(lgrey)$2$  & *(lgrey)& *(lgrey) \\
    *(lgrey) &    *(lgrey) & *(lgrey) $1$ &*(lgrey) & *(lgrey)$\red{2}$  &*(lgrey)\\
     *(lgrey)&    *(lgrey) & *(lgrey)$1$ &*(lgrey) $2$ &*(lgrey) & *(lgrey)$\red{3}$\\
     *(lgrey)& *(lgrey) & *(lgrey) $\red{1}$ & *(lgrey) $2$  & *(lgrey)& *(lgrey)\\
      *(lgrey)& *(lgrey) &*(lgrey)  & $\red{0}$ & *(lgrey)& *(lgrey)\\
\end{ytableau}
}
}
\caption{ Step-by-step insertion process for $T'_{\lambda}$}  
\label{Fig3}
 \end{figure}

Hence we obtain the transversal $T'_{\lambda}=(5,6,2,1,4,3)=$ \scalebox{0.83}{
\ytableausetup{centertableaux} 
\begin{ytableau}
     \textcolor{white}{d} & \bullet & \textcolor{white}{d}\\
     \bullet & \textcolor{white}{d} &  & &\\
    \textcolor{white}{d} & &  & & \bullet &\\
     & \textcolor{white}{d} & \textcolor{white}{d} & &  & \bullet\\
 & \textcolor{white}{d} & \bullet &  & & \\
      & \textcolor{white}{d} & \textcolor{white}{d} & \bullet & & 
\end{ytableau}}







\begin{note}\label{bijection}
    From the above discussion it is clear that for a fixed Ferrers board $\lambda$ each transversal give us a unique encoding word. And each encoding word gives us a unique transversal in that Ferrers board $\lambda$. Therfore this gives us a bijection between $S_{\lambda}(P^a_{(m,k,d)})$ and set of all encoding words for the Ferrers board $\lambda$.
\end{note}

\begin{theorem}\label{size_I}
    The cardinality of $I$ is 
    $$m - kd - 1 + k \cdot \max(0, d - l + m - 1),$$ which is independent of the boundary parameter $a$ and only depends on $m,k,d,l$.
\end{theorem}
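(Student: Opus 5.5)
The plan is to count the set $I$ from the preceding Lemma directly, by splitting $[l]$ according to where a position sits relative to the ``center'' positions $a, a+d, \dots, a+kd$. Throughout I assume $l\ge m$, since that is the case the Lemma covers. I write $s=l-m\ge 0$.

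With this notation the defining condition of $I$ for a position $i\ge a$ with $i\ne a+jd$ becomes: for every $j$ with $a+jd<i$ we need $i>a+jd+s$. Because $a+jd+s$ increases with $j$, only the largest $j$ with $a+jd<i$ matters. So for $i$ strictly between $a+jd$ and $a+(j+1)d$, or beyond $a+kd$ when $j=k$, the whole family of conditions reduces to the single inequality $i>a+jd+s$.

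The count then proceeds in three blocks.

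\emph{First block.} The positions $1,\dots,a-1$ all lie in $[a-1]$ and are all valid. This contributes $a-1$.

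\emph{Gaps between centers.} For each $0\le j\le k-1$, the open gap $(a+jd,\,a+(j+1)d)$ consists of the positions $a+jd+t$ with $1\le t\le d-1$. Such a position is valid iff $t>s$, so each gap contributes $\max(0,\,d-1-s)$. All these positions lie in $[l]$, because $a+kd\le m\le l$. Over the $k$ gaps this gives $k\max(0,d-1-s)$.

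\emph{Tail.} A position $i\in(a+kd,\,l]$ is valid iff $i>a+kd+s=l-m+a+kd$. The valid ones are the interval $(l-m+a+kd,\,l]$, which contains $m-a-kd$ positions; this number is nonnegative since $a+kd\le m$.

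Summing the three blocks gives
\[
(a-1)+k\max(0,d-1-s)+(m-a-kd)=m-kd-1+k\max(0,d-l+m-1).
\]
The parameter $a$ cancels, which gives the claimed independence of $a$.

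The main obstacle is the degenerate bookkeeping rather than any real difficulty. When $d=0$ all the centers coincide, the gaps are empty, and the formula correctly gives $m-1$. When $d=1$ the gaps are again empty, and $\max(0,-s)=0$ is consistent with that. I also need the three blocks to be disjoint, to cover every candidate position, and to exclude exactly the centers. Finally, the reduction to ``only the largest $j$ matters'' must be justified carefully from the monotonicity of $a+jd+s$ in $j$. I would state this monotonicity explicitly as the first step of the proof.
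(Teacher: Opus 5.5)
Your proposal is correct and follows the paper's proof essentially step for step. It uses the same three-block split: the base block $[a-1]$ contributing $a-1$, the $k$ interior gaps each contributing $\max(0,d-l+m-1)$, and the terminal interval contributing $m-a-kd$, with $a$ cancelling in the sum. Your explicit monotonicity argument (only the largest $j$ with $a+jd<i$ binds) makes precise a step the paper leaves implicit. So does your use of $a+kd\le m$ rather than the paper's stated $a+kd\le l$, which is what guarantees the tail count is nonnegative.
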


\begin{proof}

Let \(l,m,d,k,a\) be positive integers with \(a+kd \le l\). Define 
\(\Delta=l-m+1\) and consider the set
\[
I=\{x\in \mathbb{Z}^+ : x<a\}\ \cup\ 
\bigcup_{j=0}^{k}\{x: a+jd<x\le \text{upper}_j,\ 
l-x<m-(a+pd)\ \text{for all }p\le j\},
\]
where \(\text{upper}_j=a+(j+1)d-1\) for \(j<k\) and 
\(\text{upper}_k=l\).

To compute \(|I|\), partition the integers into three disjoint parts: 
\begin{enumerate}
     \item[(i)] \textbf{Base set:} The region \(1\le x<a\) contributes \(a-1\) elements. 
    \item[(ii)] \textbf{Intermediate gaps:}  For each \(j=0,1,\ldots ,k-1\), we have $a+jd<x<a+(j+1)d$ and $l-x<m-(a+jd)$. Combining these two inequalities the valid values lie in $a+jd+\Delta \le x \le a+(j+1)d-1$.
    The number of integers in this range is $ (a + jd + d - 1) - (a + jd + l - m + 1) + 1=d-\Delta$.
 To prevent negative counts when the required displacement exceeds the step size ($\Delta > d$), the cardinality per gap is bounded below by $0$, yielding $k \cdot \max(0, d - \Delta)$.
    \item[(iii)] \textbf{Terminal interval:} For \(j=k\), the range 
\(a+kd+\Delta \le x \le l\) contributes 
\(l-a-kd-\Delta+1\) elements.
\end{enumerate}
Summing these disjoint contributions gives
\begin{equation*}
|I| = \underbrace{(a - 1)}_{\text{Base}} + \underbrace{k \cdot \max(0, d - \Delta)}_{\text{Intermediate}} + \underbrace{(l - a - kd - \Delta + 1)}_{\text{Terminal}}
\end{equation*}
Expanding the summation:
\begin{align*}
|I| = (a - 1) + k \cdot \max(0, d - \Delta) + (l - a - kd - \Delta + 1)
\end{align*}
Substituting $\Delta = l - m + 1$ yields the closed-form expression:
\begin{equation*}
|I| = m - kd - 1 + k \cdot \max(0, d - l + m - 1)
\end{equation*}
Therefore the size of $I$ only depends on the parameters $m,k,d$ and $l$.
\end{proof}
\begin{note}
    Because the final expression for $|I|$ is entirely independent of $a$, it follows that for any alternative boundary parameter $b$ satisfying the same global constraints (e.g., $b > a$), the cardinality remains invariant.
\end{note}

\begin{corollary}
Let $a, d, k, m,$ and $l$ be positive integers satisfying the boundary condition $a + kd \le m$. The cardinality of the set $I$ is always non-negative, i.e., $|I| \ge 0$.
\end{corollary}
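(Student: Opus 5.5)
The plan is to show that the closed form from Theorem \ref{size_I} is nonnegative by bounding its two summands separately, rather than just citing the fact that a cardinality is nonnegative. The point is that the formula was obtained by adding signed counts, one of which was clipped with $\max(0,\cdot)$, so nonnegativity of the whole expression is not automatic. I would write
\[
|I| = (m-kd-1) + k\cdot \max(0,\, d-l+m-1)
\]
and show that each bracket is at least $0$.

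\emph{Second summand.} Since $k \ge 0$ and $\max(0,\, d-l+m-1)\ge 0$ by definition of the maximum, the product $k\cdot\max(0,\, d-l+m-1)$ is nonnegative. This holds for every choice of $l$, in particular whether or not $l\ge m$, so no case split on $l$ is needed.

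\emph{First summand.} Here I would use the boundary condition $a+kd\le m$ together with $a\ge 1$. These give
\[
m-kd-1 \;\ge\; a-1 \;\ge\; 0 .
\]
This matches the decomposition in the proof of Theorem \ref{size_I}: the term $m-kd-1$ is exactly the base contribution $a-1$ plus the terminal contribution $l-a-kd-\Delta+1 = m-a-kd$. The terminal contribution is nonnegative precisely because $a+kd\le m$, which is the same inequality.

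Adding the two bounds gives $|I|\ge 0$. In fact $|I|\ge a-1$, and $|I|\ge 1$ whenever $a+kd<m$ or $a\ge 2$; I may remark on this in passing.

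The only delicate step is a bookkeeping one. The proof of Theorem \ref{size_I} states the hypothesis $a+kd\le l$, whereas the corollary assumes $a+kd\le m$. I will make clear that it is the latter, the defining constraint of $P^a_{(m,k,d)}$, that controls the sign of the terminal term. In the regime $l\ge m$ covered by the validity lemma, both hypotheses hold simultaneously, so the formula applies there and the argument above goes through without change.
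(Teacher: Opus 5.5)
Your argument is correct and follows the paper's route: the paper says only that the claim is immediate from the closed form in Theorem~\ref{size_I}, and you make that explicit by noting $m-kd-1\ge a-1\ge 0$ (from $a+kd\le m$ and $a\ge 1$) and $k\cdot\max(0,d-l+m-1)\ge 0$. Your side remarks are accurate but not needed: the split of $m-kd-1$ into base and terminal contributions, the bound $|I|\ge a-1$, and the reconciliation of the hypotheses $a+kd\le l$ versus $a+kd\le m$.
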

The proof is immediate from the expression of $|I|$. 






\begin{corollary}
The set $I$ is empty, i.e., $|I|=0$ if and only if 
$a=1$, $m=kd+1$, and $l\ge d(k+1)$.
\end{corollary}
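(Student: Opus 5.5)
The plan is to read the result off the three-part decomposition from the proof of Theorem~\ref{size_I}, rather than from the collapsed closed form. That proof writes
\[
|I| = (a-1) + k\cdot \max(0,\,d-\Delta) + (l-a-kd-\Delta+1), \qquad \Delta = l-m+1 .
\]
The first step is to simplify the terminal contribution. Substituting $\Delta$ gives $l-a-kd-\Delta+1 = m-a-kd$. By the standing hypothesis $a+kd\le m$, this term is nonnegative.

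Next I check the other two terms. The base term $a-1$ is nonnegative because $a\ge 1$, and the intermediate term $k\cdot\max(0,d-\Delta)$ is nonnegative because $k\ge 0$. So $|I|$ is a sum of three nonnegative integers, which also recovers the preceding corollary. Hence $|I|=0$ if and only if each of the three terms vanishes separately; this splitting is the key step.

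The three vanishing conditions translate as follows.
\begin{enumerate}
\item[(i)] The base term vanishes iff $a=1$.
\item[(ii)] The terminal term vanishes iff $m=a+kd$, which together with $a=1$ gives $m=kd+1$.
\item[(iii)] The intermediate term vanishes iff $k=0$ or $d\le \Delta$. Using $m=kd+1$, we have $\Delta = l-kd$, so $d\le\Delta$ becomes $l\ge d(k+1)$.
\end{enumerate}
Under the corollary's hypothesis that $k$ is a positive integer, the alternative $k=0$ is excluded, so (iii) is exactly $l\ge d(k+1)$. For the converse, assuming $a=1$, $m=kd+1$ and $l\ge d(k+1)$, substitution makes all three terms zero, so $|I|=0$.

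The main obstacle is bookkeeping about the regime in which the formula applies, not the algebra. The decomposition in Theorem~\ref{size_I} was derived under the standing assumption $l\ge m$ from the lemma characterizing $I$. When $l<m$, the note after that lemma says every position is valid, so $|I|=l\ge 1$ and $I$ is never empty; this case must be ruled out explicitly. The needed consistency check is that $l\ge d(k+1)=m-1+d\ge m$ when $d\ge 1$, so the stated condition already forces the regime where the formula holds.

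I would also remark on the degenerate convention $k=d=0$ from the earlier note. There the intermediate term vanishes identically and the condition reduces to $a=m=1$, with the inequality on $l$ holding trivially. This is consistent with the statement, but it sits outside the corollary's positivity hypotheses, so I would mention it only as a remark.
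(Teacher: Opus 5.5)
Your proposal is correct and follows essentially the paper's argument: the paper writes $|I|$ as the two nonnegative summands $(m-kd-1)$ and $k\cdot\max(0,d-l+m-1)$ and deduces $a=1$ from $m=kd+1$ together with $a+kd\le m$, whereas you split $m-kd-1$ further into $(a-1)+(m-a-kd)$, which is only a cosmetic difference. Your explicit remarks make precise two points the paper's proof leaves implicit: that $k\ge 1$ is needed to pass from $k\cdot\max(0,\cdot)=0$ to $\max(0,\cdot)=0$, and that $l\ge d(k+1)=m-1+d\ge m$ puts you in the regime where the formula for $|I|$ applies.
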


\begin{proof}
From Theorem~\ref{size_I},
$$
|I|=(m-kd-1)+k\cdot \max(0,d-l+m-1),
$$
where both terms are non-negative.

Assume $|I|=0$. Then both terms must be zero. Hence
$
m-kd-1=0 \implies m=kd+1.
$
Using the condition $a+kd\le m$, we get
\[
a+kd\le kd+1 \implies a\le 1.
\]
Since $a\ge 1$, it follows that $a=1$.

Also,
$
\max(0,d-l+m-1)=0
$ implies
$
d-l+m-1\le 0.
$
Substituting $m=kd+1$, we obtain
\[
d-l+kd\le 0
\implies d(k+1)\le l.
\]

Conversely, if $a=1$, $m=kd+1$, and $l\ge d(k+1)$, then
\[
m-kd-1=0
\]
and
\[
d-l+m-1=d(k+1)-l\le 0.
\]
Thus,
\[
k\cdot \max(0,d-l+m-1)=0.
\]
Therefore $|I|=0$.
\end{proof}

\begin{note}
 At any stage of the encoding process, if we obtain $|I|=0$, then no transversal is valid for the corresponding Ferrers board $\lambda$.
\end{note}

\paragraph{Proof of Theorem \ref{main_thm} :}
At each step of the encoding process with \(l \ge m\) white cells, Theorem~\ref{size_I} implies that the number of valid positions is
\[
|I| = m - kd - 1 + k \cdot \max(0,\, d - l + m - 1).
\]
Notably, this quantity is independent of the boundary parameter \(a\). Hence, at every step of the encoding process, the sets \(P^a_{(m,k,d)}\) and \(P^b_{(m,k,d)}\) admit the same number of valid positions.

It follows that the total number of transversals of a fixed Ferrers board \(\lambda\) avoiding \(P^a_{(m,k,d)}\), obtained as the product of the cardinalities of \(I\) over all steps, is independent of \(a\). In particular,
\[
|S_{\lambda}(P^a_{(m,k,d)})| = |S_{\lambda}(P^b_{(m,k,d)})|.
\]

Moreover, the encoding process defines a bijection \(\phi_a\) between \(S_{\lambda}(P^a_{(m,k,d)})\) and the set of encoding words associated with \(\lambda\) (see Note \ref{bijection}). Similarly, we obtain a bijection \(\phi_b\) between \(S_{\lambda}(P^b_{(m,k,d)})\) and the same set of encoding words. Consequently, the composition \(\phi_b^{-1} \circ \phi_a\) yields a bijection between \(S_{\lambda}(P^a_{(m,k,d)})\) and \(S_{\lambda}(P^b_{(m,k,d)})\).

Therefore, for any distinct \(a,b \in [m]\),
$$
P^a_{(m,k,d)} \sim_s P^b_{(m,k,d)}.
$$
\qed

\begin{theorem}\label{claw_set_size}
If $n<m$ then $|S_n(P^a_{(m,k,d)})|=n!$ and if $n\geq m$ then
\begin{align*}
    |S_n(P^a_{(m,k,d)})|=& \left(\displaystyle \prod_{d+m-1< l\leq n}\{(m-kd-1)\}\right) \\
    & \displaystyle\left( \prod_{m\leq l \leq d+m-1}(m-kd-1)+k(d-l+m-1)\right ) \displaystyle\left(\prod_{1\leq l \leq m-1}l\right). 
\end{align*}
\end{theorem}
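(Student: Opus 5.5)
The plan is to specialise the encoding to the full $n\times n$ square board and multiply the number of valid positions over the $n$ rows. By Note \ref{bijection}, $S_n(P^a_{(m,k,d)})$ is in bijection with the encoding words for $\lambda=(n,\dots,n)$. So $|S_n(P^a_{(m,k,d)})|$ equals the number of ways to choose one valid position in each row, provided the number of valid positions at each stage depends only on the stage and not on earlier choices.

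On the square board, at the stage when the $i$-th row from the top is processed, $i-1$ columns are already gray. The remaining white subboard therefore has exactly $l=n-i+1$ columns in its top row, whichever columns were used. As $i$ runs from $1$ to $n$, $l$ runs through $n,n-1,\dots,1$, each value occurring exactly once. Moreover, by the preceding Lemma, the set $I$ depends only on $l$ (and $m,k,d,a$), not on which actual columns remain. Hence the count factors as $\prod_{l=1}^{n} N(l)$, where $N(l)$ is the number of valid positions when $l$ columns are white. When $l=1$ the placement is forced (letter $0$), and this contributes a factor $1$, consistent with the formula.

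Next I evaluate $N(l)$ in three ranges.
\begin{enumerate}
\item For $l<m$, the Note after the Lemma says all $l$ positions are valid, so $N(l)=l$. This gives the factor $\prod_{1\le l\le m-1} l=(m-1)!$. It also gives $n!$ outright when $n<m$, since then every $l\le n<m$.
\item For $l\ge m$, Theorem~\ref{size_I} gives $N(l)=m-kd-1+k\max(0,d-l+m-1)$. If $m\le l\le d+m-1$, then $d-l+m-1\ge 0$, so the maximum equals $d-l+m-1$. This yields the middle product $\prod_{m\le l\le d+m-1}\bigl((m-kd-1)+k(d-l+m-1)\bigr)$.
\item If $l>d+m-1$, the maximum is $0$ and $N(l)=m-kd-1$, giving the first product.
\end{enumerate}
When $n<d+m-1$, the middle product is understood to run only up to $l=n$ and the first product is empty. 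I will state this reading explicitly.

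The main obstacle is justifying that the encoding really counts avoiders: every sequence of valid choices must complete to an avoiding permutation, and the valid set at each step must not depend on earlier choices beyond $l$. Validity was defined as the existence of a completion, and the Lemma characterises $I$ purely by relative column index among the white columns. So I will argue that on a square board, relative order among the remaining columns is all that matters for a future occurrence. Any occurrence involves the current top-row entry as its highest-row element, hence as the largest value under the paper's row-indexing convention. That entry can serve only as the top of a claw, so an occurrence amounts to choosing columns to its left and right among those placed later, which are exactly the white columns. With this, the count reduces to the product above and the theorem follows. As a sanity check, for $k=d=0$ the formula reduces to Theorem~\ref{claw_size}.
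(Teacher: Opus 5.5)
Your proposal is correct and is the paper's argument: on the square board $l$ takes each value $n,n-1,\dots,1$ exactly once, and the count is the product of $l$ over the steps with $l<m$ and of the value from Theorem~\ref{size_I} over the steps with $l\ge m$. In your side justification, state that each occurrence is charged to its unique highest-row entry, since the current entry can also be a leaf under an earlier-placed top; this is only a wording fix.

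Your restriction of the middle product to $l\le n$ when $n<d+m-1$ is necessary, not just a convention. For $m=3$, $k=1$, $d=2$, $a=1$, $n=3$, the literal formula includes the factor at $l=4$, which equals $0$, whereas $132$ and $231$ avoid $P^1_{(3,1,2)}$. The paper's one-line proof passes over this point.
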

\begin{proof}
The result is immediate for $n<m$, since no pattern from 
$P^a_{(m,k,d)}$ can occur in a permutation of length $n$.

Now assume $n\ge m$. Considering the $n\times n$ square Ferrers board, at each step of the encoding process with $l\ge m$ white cells,  by Theorem \ref{size_I}, the number of valid positions is
\[
|I|=m-kd-1+k \cdot \max(0,d-l+m-1).
\]
If $l<m$, then all $l$ positions are valid. Taking the product over all steps of the encoding process gives the desired result.
\end{proof}

\begin{note}
 When we consider a single claw shape POP as in Figure \ref{Fig1}, i.e., $k=d=0$ then from Theorem \ref{claw_set_size} we have,  $|S_n(P^a_{(m,0,0)})|=(m-1)^{n-m+1}(m-1)!$  for $n \geq m$, which is exactly same as Theorem \ref{claw_size}.   
\end{note}

In conclusion, this paper has successfully broadened the understanding of partially ordered patterns by establishing the shape-Wilf-equivalence for extensive families of claw-shaped POPs. Central to our results is the development of a novel, recursive encoding process for transversals of Ferrers boards, which offers a fundamentally new bijective methodology distinct from prior literature. By rigorously defining the valid positional constraints within this framework, we not only proved the shape-Wilf-equivalence of these pattern sets but also derived a generalized formula to enumerate the permutations avoiding them. We anticipate that this encoding technique could be adapted in future research to explore shape-Wilf-equivalence and enumeration for other complex poset structures and generalized pattern families.

\end{document}